\definecolor{darkblue}{rgb}{0,0,0.6}
\newcommand{\zerodisplayskips}{%
  \setlength{\abovedisplayskip}{2pt}%
  \setlength{\belowdisplayskip}{2pt}%
  \setlength{\abovedisplayshortskip}{0pt}%
  \setlength{\belowdisplayshortskip}{0pt}}
\appto{\normalsize}{\zerodisplayskips}
\appto{\small}{\zerodisplayskips}
\appto{\footnotesize}{\zerodisplayskips}
\newcommand{\A}{\mathbb{A}}
\newcommand{\AAA}{\mathbb{A}^1}
\providecommand{\GW}{\mathrm{GW}}
\providecommand{\Spec}{\mathrm{Spec}}
\DeclareRobustCommand{\DDelta}{{\mathpalette\bb@Delta\relax}}
\newcommand{\bb@Delta}[2]{%
  \begingroup
  \sbox\z@{$\m@th#1\Delta$}%
  \dimendef\Dht=6 \dimendef\Dwd=8
  \setlength{\Dwd}{\wd\z@}%
  \setlength{\Dht}{\ht\z@}%
  \begin{picture}(\Dwd,\Dht)
  \put(0,0){$\m@th#1\Delta$}
  \put(.42\Dwd,.7\Dht){\line(10,-26){.25\Dht}}
  \end{picture}%
  \endgroup
}
\providecommand{\xto}[1]{\xrightarrow{#1}}
\newtheorem{lemma}{Lemma}
\newtheorem{theorem}{Theorem}
\newtheorem{proposition}{Proposition}
\newtheorem{remark}{Remark}
\newcommand{\puncturedaffine}[1]{\A^n\hspace{-0.1em}\smallsetminus\{0\}}
\renewcommand{\P}{\mathbb{P}}
\title{There is no Cazanave's Theorem for punctured affine space}
\author{Thomas Brazelton and William Hornslien}
\date{August 2024}
\begin{document}
\begin{abstract}
    In his thesis, Cazanave proved that the set of naive $\A^1$-homotopy classes of endomorphisms of the projective line admits a monoid structure whose group completion is genuine $\A^1$-homotopy classes of endomorphisms of the projective line. In this very short note we show that such a statement is never true for punctured affine space $\puncturedaffine{n}$ for $n\ge 2$ .
\end{abstract}

\maketitle

\textbf{Assumption}: We work over a base field $k$ of characteristic $\ne 2$.

A foundational theorem of Morel states that the set of $\A^1$-homotopy classes of endomorphisms of the projective line is isomorphic as a ring with $\GW(k)\times_{k^\times} k^\times/(k^\times)^2$ \cite[Theorem 7.36]{Morel}. The genuine homotopy classes emerge from a localization of the category of ($\infty$-categorical) presheaves on smooth $k$-schemes, however one can consider a weaker notion of homotopy, namely identifying two maps $f,g \colon X \to Y$ if there is a map $X \times \A^1_k \to Y$ restricting to~$f$ and $g$ at times $0,1\in \A^1_k$.\footnote{This notion dates back to Gersten and  Karoubi--Villamayor  \cite{ Gersten, KarVilla}. It was called an \textit{elementary homotopy} in \cite{MorelVoevodsky99}.} This is called \textit{naive~$\A^1$-homotopy}, and we denote by naive (resp. genuine) homotopy classes of maps~$[X,Y]^\mathrm{N}$ (resp.~$[X,Y]^{\A^1}$). In general there is a map~$[X,Y]^\mathrm{N} \to [X,Y]^{\A^1}$ but it fails to be a bijection in general.

Cazanave, in his PhD thesis, proved the remarkable result that naive endomorphisms of the projective line $[\P^1,\P^1]^\mathrm{N}$ admits a monoid structure, and the natural map
\begin{align*}
    [\P^1,\P^1]^\mathrm{N} \to [\P^1,\P^1]^{\A^1}
\end{align*}
is a group completion \cite[Proposition 3.23]{Caz}. We show that an analogous result cannot be true for the motivic spheres $\puncturedaffine{n}$ for $n\ge 2$.

Morphisms of punctured affine space $\puncturedaffine{n} \to \puncturedaffine{n}$ are given by tuples $f=(f_1, \ldots, f_n)$ of polynomials in $n$ variables, and these come in two flavors --- those for which $f(0) \ne 0$, and those for which $f(0) = 0$.

\begin{proposition}\label{prop:unimodularity-in-local-algebras}
If $f = (f_1, \ldots, f_n)$ is an endomorphism of punctured affine space, then the ideal $\left\langle f_1, \ldots, f_n \right\rangle \trianglelefteq k[x_1, \ldots, x_n]$ becomes a unimodular row after inverting $x_i$ for any~$1\le i \le n$.
\end{proposition}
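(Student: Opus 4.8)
The plan is to convert the hypothesis — that $f = (f_1,\dots,f_n)$ defines a morphism with target $\puncturedaffine{n}$ and not merely $\A^n$ — into the algebraic assertion that $f_1,\dots,f_n$ generate the unit ideal in the localization $k[x_1,\dots,x_n][x_i^{-1}]$. The starting point is the description recalled just above: for $n\ge 2$ one has $\Gamma(\puncturedaffine{n},\mathcal O) = k[x_1,\dots,x_n]$ (algebraic Hartogs, using that $\{0\}$ has codimension $\ge 2$ in the normal scheme $\A^n$), so an endomorphism of $\puncturedaffine{n}$ is precisely a tuple of polynomials $(f_1,\dots,f_n)$ whose common vanishing locus $V(f_1,\dots,f_n)\subseteq\A^n$ is contained in $\{0\}$.

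Fix $i$. First I would restrict $f$ to the principal affine open $D(x_i) = \Spec R_i$, where $R_i = k[x_1,\dots,x_n][x_i^{-1}]$; note $D(x_i)\subseteq\puncturedaffine{n}$, since the origin lies in the vanishing locus of $x_i$. The composite $D(x_i)\hookrightarrow\puncturedaffine{n}\xto{f}\puncturedaffine{n}\hookrightarrow\A^n$ never meets the origin, which says exactly that the closed subset $V(f_1,\dots,f_n)\subseteq\Spec R_i$ — the preimage of $\{0\}$ along this morphism — is empty. I would then invoke the elementary fact that a closed subset $V(I)\subseteq\Spec R$ is empty if and only if $I=R$ (every proper ideal lies in a maximal ideal; no Nullstellensatz required). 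Hence $\langle f_1,\dots,f_n\rangle = R_i$, i.e.\ there exist $g_1,\dots,g_n\in R_i$ with $\sum_j f_j g_j = 1$ — which is the assertion that $(f_1,\dots,f_n)$ is a unimodular row over $R_i$.

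The one point deserving care — and the reason the statement is phrased after inverting some $x_i$ — is that $\puncturedaffine{n}$ is itself \emph{not} affine for $n\ge 2$, so the same argument cannot be run globally: the vanishing locus of $\langle f_1,\dots,f_n\rangle$ on all of $\puncturedaffine{n}$ is empty as well, yet $\langle f_1,\dots,f_n\rangle$ need not be the unit ideal of $k[x_1,\dots,x_n]$ — it is not whenever $f(0)=0$, since then every $f_j$ lies in $\langle x_1,\dots,x_n\rangle$. Passing to the affine chart $D(x_i)$ is precisely what removes this obstruction, and nothing further stands in the way.
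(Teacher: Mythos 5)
Your proof is correct and follows essentially the same route as the paper's: both rest on the single observation that the common vanishing locus of $f_1,\dots,f_n$ in $\A^n$ is contained in the origin, which disappears from the picture once $x_i$ is inverted. The only difference is one of packaging --- the paper first invokes the Nullstellensatz to get $\langle x_1,\dots,x_n\rangle\subseteq\sqrt{\langle f_1,\dots,f_n\rangle}$ and then inverts $x_i$, whereas you localize first and use only that an ideal with empty vanishing locus in an affine scheme is the unit ideal, which sidesteps the Nullstellensatz entirely.
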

\begin{proof} Since $f$ is an endomorphism of punctured affine space, we have that its vanishing locus (which could be empty), is contained in the set containing the origin. By Nullstellensatz this implies that
\begin{align*}
    \left\langle x_1, \ldots, x_n \right\rangle \subseteq \sqrt{\left\langle f_1, \ldots, f_n \right\rangle}.
\end{align*}
Inverting $x_i$ on either side of the equality implies that $1$ is contained in $\left\langle f_1, \ldots, f_n \right\rangle$. 
\end{proof}

We can now ask whether $\left\langle f_1, \ldots, f_n \right\rangle$ is unimodular in the polynomial algebra $k[x_1, \ldots, x_n]$ before inverting any $x_i$. Whether this is true of false has the following consequences.

\begin{lemma}\label{lem:unimodular}
Let $f =(f_1, \ldots, f_n) \colon \puncturedaffine{n} \to \puncturedaffine{n}$ be an endomorphism of punctured affine space.
\begin{enumerate}
    \item If $(f_1, \ldots, f_n)$ is a unimodular row in $k[x_1, \ldots, x_n]$, then $f$ is naively $\A^1$-homotopic to a constant map.
    \item If $(f_1, \ldots, f_n)$ is not a unimodular row in $k[x_1, \ldots, x_n]$, then the local algebra \begin{equation*}
        \frac{k[x_1, \ldots, x_n]_{(x_1, \ldots, x_n)}}{\langle f_1, \ldots, f_n \rangle}
    \end{equation*} is finite length. In the terminology of \cite{KW} this implies that $f$, considered as an endomorphism of affine space, has an \textit{isolated zero at the origin}.
\end{enumerate}
\end{lemma}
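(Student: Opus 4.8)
The plan is to treat the two cases separately. Part (1) will come from an explicit scaling homotopy, and part (2) from Proposition~\ref{prop:unimodularity-in-local-algebras} together with routine commutative algebra.

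For (1): saying that $(f_1,\dots,f_n)$ is a unimodular row in $k[x_1,\dots,x_n]$ is the same as saying that the polynomial endomorphism $\tilde f = (f_1,\dots,f_n)\colon \A^n \to \A^n$ has empty vanishing locus, hence factors through the open subscheme $\puncturedaffine{n}$; by construction $\tilde f$ restricts on $\puncturedaffine{n}$ to the given map $f$. I would then consider
\[
    H\colon \puncturedaffine{n}\times \A^1 \longrightarrow \puncturedaffine{n}, \qquad H(x,t) = \tilde f(tx),
\]
that is, the restriction to $\puncturedaffine{n}\times\A^1$ of the composite of the scaling action $\A^n\times\A^1\to\A^n$ with $\tilde f$. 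This is a well-defined morphism because $tx$ lies in $\A^n$ for every value of $t$ while $\tilde f$ never vanishes; at $t=1$ it equals $f$, and at $t=0$ it is the constant map at $\tilde f(0)=(f_1(0),\dots,f_n(0))$, a $k$-point of $\puncturedaffine{n}$. Hence $f$ is naively $\A^1$-homotopic to a constant map.

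For (2): by Proposition~\ref{prop:unimodularity-in-local-algebras}, for each $i$ the ideal $\langle f_1,\dots,f_n\rangle$ contains a power of $x_i$ (clear denominators in an expression $1=\sum f_j g_j$ with $g_j\in k[x_1,\dots,x_n][x_i^{-1}]$), and therefore $\mathfrak m^N\subseteq\langle f_1,\dots,f_n\rangle$ for $\mathfrak m = \langle x_1,\dots,x_n\rangle$ and $N$ sufficiently large. If $(f_1,\dots,f_n)$ is not unimodular, then $\langle f_1,\dots,f_n\rangle$ is a proper ideal containing $\mathfrak m^N$, so any maximal ideal containing it contains $\mathfrak m$ and therefore equals $\mathfrak m$; hence $\mathfrak m^N\subseteq\langle f_1,\dots,f_n\rangle\subseteq\mathfrak m$. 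Localizing at $\mathfrak m$, the ring $k[x_1,\dots,x_n]_{\mathfrak m}/\langle f_1,\dots,f_n\rangle$ is then a nonzero quotient of the finite-length ring $k[x_1,\dots,x_n]_{\mathfrak m}/\mathfrak m^N$, hence is itself of finite length (equivalently, finite-dimensional over $k$, as the residue field is $k$). Finally, since $f$ takes values in $\puncturedaffine{n}$ the polynomials $f_1,\dots,f_n$ have no common zero away from the origin, so the zero scheme of $\tilde f$ on $\A^n$ is supported at the origin; combined with the finite length just established, this is exactly the statement that $\tilde f$ has an isolated zero at the origin in the sense of \cite{KW}.

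I do not expect a genuine obstacle. The points deserving a little care are checking that $H$ is an honest morphism of $k$-schemes --- it is the restriction of $\tilde f\circ\mu$, where $\mu$ is the scaling action of $\A^1$ on $\A^n$ --- and noting that the non-unimodularity hypothesis is precisely what keeps the local algebra from collapsing to zero (when $(f_1,\dots,f_n)$ is unimodular the quotient vanishes, which is why that case is excluded). The implication "$\mathfrak m^N\subseteq\langle f_1,\dots,f_n\rangle \Rightarrow$ finite length" is the standard fact that a Noetherian local ring with nilpotent maximal ideal is Artinian.
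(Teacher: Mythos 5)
Your proof is correct. Part (2) is essentially the paper's argument: clear denominators in the unimodularity relation from Proposition~\ref{prop:unimodularity-in-local-algebras} to get $x_i^{d_i}\in\langle f_1,\ldots,f_n\rangle$, hence a power of the maximal ideal inside the ideal, hence finite length after localizing; your extra remarks about non-vanishing of the quotient and the support of the zero scheme are fine and only add detail. Part (1), however, takes a genuinely different route. The paper extends $f$ to $\tilde f\colon\A^n\to\puncturedaffine{n}$ and then invokes the Quillen--Suslin theorem (via \cite[\S XXI.3]{Lang}) to conclude that the unimodular row is naively homotopic to a constant one. You instead observe that once $f$ factors through the naively contractible space $\A^n$, the single elementary homotopy $H(x,t)=\tilde f(tx)$ contracts it to the constant map at $\tilde f(0)$; this is well defined precisely because unimodularity over $k[x_1,\ldots,x_n]$ (equivalently, the scheme-theoretic vanishing locus $V(f_1,\ldots,f_n)\subseteq\A^n$ being empty --- note this should be read scheme-theoretically, not on $k$-points, over a non-closed field) means $\tilde f$ lands in $\puncturedaffine{n}$ on all of $\A^n$. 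Your argument is more elementary and self-contained: it avoids Quillen--Suslin entirely and produces an explicit one-step homotopy. What the paper's route buys is contact with the general theory of unimodular rows over polynomial rings (which reappears in the closing discussion of $[\Spec(A),\puncturedaffine{n}]^\mathrm{N}$), but for the statement of the lemma itself your scaling homotopy suffices.
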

\begin{proof} For the first statement, if we suppose $(f_1, \ldots, f_n)$ is a unimodular row in $k[x_1, \ldots, x_n]$, then $f$ extends to a map~${\tilde{f} \colon \mathbb{A}^n \to \puncturedaffine{n}}$. By the Quillen--Suslin theorem, all algebraic vector bundles on affine space are trivial. It follows that the unimodular row is naively homotopy equivalent to a constant map (see \cite[§XXI.3]{Lang}).

On the other hand, if $(f_1, \ldots, f_n)$ is not unimodular in $k[x_1, \ldots, x_n]$, it is still unimodular after inverting $x_i$ for each $i$ by Proposition~\ref{prop:unimodularity-in-local-algebras}. In particular, this implies that there is some~$d_i\in \mathbb{Z}_{\ge0}$ for which
\begin{align*}
    x_i^{d_i} \in \left\langle f_1, \ldots, f_n \right\rangle \trianglelefteq k[x_1, \ldots, x_n].
\end{align*}
This implies that the local algebra $k[x_1, \ldots, x_n]_{(x_1, \ldots, x_n)}/\left\langle f_1, \ldots, f_n \right\rangle$ is finite-dimensional.
\end{proof}

We can now prove the following theorem.

\begin{theorem} For $n\ge 2$, there is no monoid structure on $\left[ \puncturedaffine{n}, \puncturedaffine{n} \right]^\mathrm{N}$ which makes
\begin{align*}
    \left[ \puncturedaffine{n}, \puncturedaffine{n} \right]^\mathrm{N} \to \left[ \puncturedaffine{n},\puncturedaffine{n} \right]^{\A^1} \cong \GW(k)
\end{align*}
into a monoid homomorphism (hence it can never be a group completion).
\end{theorem}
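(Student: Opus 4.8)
The plan is to show that the existence of such a monoid structure would force the image of the natural map $\phi\colon\left[\puncturedaffine{n},\puncturedaffine{n}\right]^{\mathrm N}\to\left[\puncturedaffine{n},\puncturedaffine{n}\right]^{\A^1}\cong\GW(k)$ to be a submonoid of $(\GW(k),+)$, and then to contradict this by exhibiting enough of the image to see it is not closed under addition.

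The first step is to describe $\phi$ using Lemma~\ref{lem:unimodular}. On unimodular classes $\phi$ is zero, since a unimodular row is naively nullhomotopic and a constant self-map of a motivic sphere has degree $0$. On a non-unimodular class, represented by $f=(f_1,\dots,f_n)$, Lemma~\ref{lem:unimodular}(2) says the origin is an isolated zero of $f$; as the vanishing locus lies in $\{0\}$, it is the \emph{only} zero, so $f$ already restricts to an endomorphism of $\puncturedaffine{n}$ and, under $\A^n/\puncturedaffine{n}\simeq\Sigma(\puncturedaffine{n})$, induces on the cofiber exactly the collapse computing the Kass--Wickelgren local degree. Hence $\phi(f)=\deg_0^{\A^1}(f)$, which by \cite{KW} is the class of a \emph{genuine} nondegenerate symmetric bilinear form — the Scheja--Storch form on the finite-length local algebra of Lemma~\ref{lem:unimodular}(2) — whose rank is the $k$-dimension of that algebra, so $\ge 1$. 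In particular $\operatorname{im}\phi\subseteq\{0\}\cup\{\text{classes of nondegenerate forms of rank}\ge1\}$.

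Now suppose $\star$ is a monoid structure making $\phi$ a homomorphism into $(\GW(k),+)$. The maps $(ax_1,x_2,\dots,x_n)$ are non-unimodular with a simple zero at the origin, so $\langle a\rangle\in\operatorname{im}\phi$ for all $a\in k^\times$, and $0\in\operatorname{im}\phi$; being a submonoid, $\operatorname{im}\phi$ then contains every sum $\langle a_1\rangle+\cdots+\langle a_m\rangle$, i.e.\ (as $\operatorname{char}k\ne2$, every form diagonalizes) the class of every nondegenerate form over $k$. But the rank-$2$ classes actually in $\operatorname{im}\phi$ are severely limited: if $\phi(f)$ has rank $2$ then the local algebra $A$ has $\dim_kA=2$, and since the origin is $f$'s only geometric zero $A\otimes_k\bar k$ is local; in characteristic $\ne2$ this forces $A$ to have residue field $k$, hence $A\cong k[x]/(x^2)$, and then a socle generator $x$ is isotropic for the nondegenerate Scheja--Storch form (it pairs with itself to $\eta(x^2)=0$), so $\phi(f)$ is hyperbolic. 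Thus the only rank-$2$ element of $\operatorname{im}\phi$ is $\langle1\rangle+\langle-1\rangle$, whereas the submonoid it must contain includes $\langle a\rangle+\langle b\rangle$ for all $a,b\in k^\times$, which are not all hyperbolic — a contradiction.

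The step I expect to be the real work is the identification $\phi(f)=\deg_0^{\A^1}(f)$ in the first paragraph: matching the sphere description of $\puncturedaffine{n}$ with the collapse map used to define the local degree, and invoking that the local degree is computed by the Scheja--Storch form. The rest — the two-dimensional local algebra being $k[x]/(x^2)$ in characteristic $\ne2$, the hyperbolicity computation, and the formal fact that homomorphic images of monoids are submonoids — should be short.
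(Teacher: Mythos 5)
Your proof is correct and follows essentially the same route as the paper's: both reduce to the observation that a non-nullhomotopic naive class must be the local $\A^1$-degree at the origin of a map with an isolated zero there (via the cofiber/suspension identification), and both contradict the submonoid property of the image using the fact that rank-two such local degrees are forced to be hyperbolic. The only differences are cosmetic: you reprove the rank-two case of the Quick--Strand--Wilson theorem by hand (via the socle of $k[x]/(x^2)$) where the paper simply cites \cite[Theorem 2.2]{QSW}, and you derive the contradiction from a general non-hyperbolic $\langle a\rangle + \langle b\rangle$ where the paper uses the single class $2\langle 1\rangle = \phi(\mathrm{id}) + \phi(\mathrm{id})$ (your variant is, if anything, marginally more robust).
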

\begin{proof} Since every endomorphism of punctured affine space extends to an endomorphism of affine space, we obtain an induced map on the homotopy cofiber which makes the diagram commute
\[ \begin{tikzcd}
    \puncturedaffine{n}\rar[hook]\dar["f" left] & \A^n\rar\dar  & \frac{\A^n}{\puncturedaffine{n}}\dar[dashed]\dar["\Sigma_{S^1}f" right]\\
    \puncturedaffine{n}\rar[hook] & \A^n\rar & \frac{\A^n}{\puncturedaffine{n}}.
\end{tikzcd} \]
The rightmost map is the $S^1$-suspension of $f$. If $f$ is a unimodular row, it is naively $\A^1$-homotopic to a constant map, so without loss of generality we assume $f$ is not a unimodular row, which implies it has an isolated zero at the origin by Lemma~\ref{lem:unimodular}. In particular since there is a group isomorphism ${\left[ \frac{\A^n}{\puncturedaffine{n}}, \frac{\A^n}{\puncturedaffine{n}} \right]^{\A^1}\cong \GW(k)}$ via Morel's local Brouwer degree at the origin, and we are in the stable range, we conclude that the $\A^1$-degree of $\puncturedaffine{n} \xto{f} \puncturedaffine{n}$ is equal to the local $\A^1$-Brouwer degree of $f$ at the origin. Since $f$ has an isolated zero at the origin, we conclude by \cite{KW} that $\deg_0^{\A^1}(f)$ is an EKL form.

Observe that $\left\langle 1 \right\rangle\in \GW(k)$ is the $\A^1$-Brouwer degree of the identity morphism on~$\puncturedaffine{n}$. If $\left[ \puncturedaffine{n}, \puncturedaffine{n} \right]^\mathrm{N}$ admitted a monoid structure, then $2 \left\langle 1 \right\rangle$ would be representable by an endomorphism of $\puncturedaffine{n}$, and hence would be the local $\A^1$-Brouwer degree of an endomorphism of affine space at the origin. However since EKL forms of rank $\ge2$ must contain a hyperbolic form by a theorem of Quick, Strand, and Wilson \cite[Theorem 2.2]{QSW}, we conclude that no such endomorphism can exist. 
\end{proof}

\begin{remark}
It is still possible that there is a monoid structure on a \emph{subset} of the naive homotopy classes $\left[ \puncturedaffine{n}, \puncturedaffine{n} \right]^\mathrm{N}$ that group completes to $\GW(k)$. For example, Quick, Strand, and Wilson show that for $u\in k^\times$ the quadratic forms $\mathbb{H}$ and $\mathbb{H}+ \langle u \rangle $ are representable by endomorphisms of $\A^n$. A monoid generated by these elements would group complete to~$\GW(k)$.
\end{remark}

The story would have been different if $\puncturedaffine{n}$ was affine scheme for $n \geq 2$.  The set~$[\Spec(A), \puncturedaffine{n}]^\mathrm{N}$ can be identified with unimodular rows of length $n$ in the ring $A$. There are several ways to endow this set with a group structure. Van der Kallen \cite{VANDERKALLEN1983363} used weak Mennicke symbols to construct a group structure, while Asok and Fasel \cite{AsokFasel} have used~$\AAA$-homotopy theory and the fact that $\puncturedaffine{n}$ is an $h$-group in the $\AAA$-homotopy category. Lerbet \cite{LERBET2024109415} proved that the two group structures agree. It is however crucial that the domain is affine to end up in the world of unimodular rows, as the main theorem demonstrates.

\subsection*{Acknowledgements}

We thank  Aravind Asok and Marc Levine for helpful comments and feedback. The first named author is supported by an NSF Postdoctoral Research Fellowship (DMS-2303242).

\bibliographystyle{alpha}
\bibliography{references}
\end{document}